\newcommand{\shrinkmargins}[1]{
  \addtolength{\textheight}{#1\topmargin}
  \addtolength{\textheight}{#1\topmargin}
  \addtolength{\textwidth}{#1\oddsidemargin}
  \addtolength{\textwidth}{#1\evensidemargin}
  \addtolength{\topmargin}{-#1\topmargin}
  \addtolength{\oddsidemargin}{-#1\oddsidemargin}
  \addtolength{\evensidemargin}{-#1\evensidemargin}
  }
\newtheorem{theorem}{Theorem}
\newtheorem{lemma}[theorem]{Lemma}
\newtheorem{corollary}[theorem]{Corollary}
\newtheorem{proposition}[theorem]{Proposition}
\newtheorem*{definition}{Definition}
\theoremstyle{remark}
\newtheorem*{remark}{Remark}
\numberwithin{theorem}{section} \numberwithin{equation}{section}
\newcommand{\Q}{\mathbb{Q}}
\newcommand{\N}{\mathbb{N}}
\begin{document}
\title[Records on the vanishing of Fourier coefficients of \dots ]%
{Records on the vanishing of Fourier coefficients of Powers Of the Dedekind Eta Function}
\author{Bernhard Heim }
\address{German University of Technology in Oman, Muscat, Sultanate of Oman}
\email{bernhard.heim@gutech.edu.om}
\author{Markus Neuhauser}
\address{German University of Technology in Oman, Muscat, Sultanate of Oman}
\email{markus.neuhauser@gutech.edu.om}
\author{Alexander Weisse}
\address{Max-Planck-Institute for Mathematics, Vivatsgasse 7, 53111 Bonn, Germany}
\email{weisse@mpim-bonn.mpg.de}
\subjclass[2010] {Primary 05A17, 11F20; Secondary 11F30, 11F37}
\keywords{Fourier Coefficients, Euler Products, Dedekind Eta Function, Lehmer Conjecture, Maeda conjecture}
\begin{abstract}
In this paper we significantly extend Serre's table on the vanishing
properties of Fourier coefficients of odd powers of the Dedekind eta function.
We address several conjectures of Cohen and Str{\"o}mberg and give a partial answer to a question of Ono.
In the even-power case, we extend Lehmer's conjecture on the coefficients of the discriminant function $\Delta$ to all non-CM-forms.
All our results are supported with numerical data. For example all Fourier coefficients $a_9(n)$ of the $9$-th power of the
Dedekind eta function are non-vanishing for $n \leq 10^{10}$. We also relate the non-vanishing of the Fourier coefficients of $\Delta^2$ to Maeda's conjecture.
\end{abstract}
\maketitle
\section{Introduction}
Already Euler and Jacobi had been interested in finding identities between infinite products and infinite sums,
to obtain information on the arithmetic properties of the coefficients.
\begin{eqnarray}
\sum_{n=0}^{\infty} a_1(n) \,\, X^n := \prod_{n=1}^{\infty} \left( 1 - X^n \right) & = &
\sum_{n= -\infty}^{\infty} (-1)^n \,\, X^{\frac{3n^2 +n}{2}},\label{Euler}\\
\sum_{n=0}^{\infty} a_3(n) \,\, X^n := \prod_{n=1}^{\infty} \left( 1 - X^n \right)^3 & = &
\sum_{n= 0}^{\infty} (-1)^n \,\, (2n+1) \,\, X^{\frac{n^2 +n}{2}}. \label{Jacobi}
\end{eqnarray}
These are the first examples of identities involving integral powers $\eta^r$ of the Dedekind eta function. Let
$\tau$ be in the upper half plane $\mathbb{H} := \{ z \in \mathbb{C} \, \vert \, \mathop{\rm Im}(z)>0 \}$ and $q:= e^{2 \pi \, i \tau}$.
In 1877 Dedekind introduced a modular form of weight $\frac{1}{2}$
\begin{equation}
\eta(\tau):= q^{\frac{1}{24}} \prod_{n=1}^{\infty} \left( 1 - q^n \right),
\end{equation}
a function on $\mathbb{H}$ satisfying
\begin{equation*}
\eta(\tau +1) =  e^{\pi \, i/12} \,\, \eta(\tau), \,\,\,
\eta  \left(-\frac{1}{\tau}\right) = \sqrt{-i \tau} \,\,  \eta(\tau).
\end{equation*}
The arithmetic properties, in particular the non-vanishing properties, of the Fourier coefficients $a_r(n)$, defined by
\begin{equation}
\eta(\tau)^r :=  q^{\frac{r}{24} }     \prod_{n=1}^{\infty}  \left( 1 - q^n \right)^r =
q^{\frac{r}{24} }     \sum_{n=0}^{\infty} a_r(n) \, q^n,
\end{equation}
are of importance.
In this paper we focus on $\eta^r$ for integers $r>0$, since for integers $r\leq 0$ the coefficients are either trivial (case $r=0$) or all strictly positive.

The general picture on the vanishing properties is as follows. It is known that $\eta^r$ is superlacunary \cite{OS95} if and only if
$ r \in S_{\text{odd}}:= \{ 1,3\}$ (the cases already considered by Euler and Jacobi). In the case that $r$ is even, Serre \cite{Se85} proved that $\eta(\tau)^r$ is lacunary, i.e.
\begin{eqnarray}
\lim_{N \to \infty} \frac{  \left|
\left\{ n \in \mathbb{N} \, \, \vert \,\, n \leq N, \, a(n) \neq 0 \right\}
\right| }{N} = 0,
\end{eqnarray}
if and only if $ r \in
S_{\text{even}}:=  \{ 2,4,6,8,10,14,26\}$. For $r=24$ the Lehmer conjecture predicts that the coefficients $\tau(n):=a_{24}(n-1) \neq 0$.
Ono \cite{On95} found that the case $r=12$ seems to have similar properties. For example the index $n$ of the first vanishing coefficient (if there is one) has to be a prime number.
In the odd case Serre published a table, based on partly unpublished results of Atkin, Cohen and Newman, of $n$ such that $a_r(n)=0$.
\\   \
\begin{center}
\fbox{%
\begin{tabular}{lll}
Atkin, Cohen & $r=5$ & $n=1560,1802,1838,2318,2690,\ldots $
\\
Atkin & $r=7$ & $n=28017$
\\
Newman \cite{Ne56} & $r=15$ & $n=53$
\end{tabular}
}
\end{center}
\ \\
This is an direct extract of
\cite{Se85}.
It is not mentioned how many pairs $(r,n)$ had been studied.
Additionally \cite{Co82} refers to results of Atkin and Newman, where $n$'s have been found for $r=9,11$ such that $a_r(n)=0$.
In this paper we could not confirm this result, since $a_r(n) \neq 0$ for these $r$'s and $n \leq 10^{10}$.

In \cite{HNR17}, we showed, for $r=9,11,13,17,19, 21,  23$, that $a_r(n)  \neq 0$ for all $n \leq 50 000$.
From a bird's eye view, having explicit formulas in mind, Cohen and Str{\"o}mberg (\cite{CS17} Remark 2.1.27) state that except for the cases
$r\in S_{\text{odd}}$ and $r\in S_{\text{even}}$, nothing is known,
but they give several conjectures (Exercise 2.6). They ask whether $\eta^5,\eta^{15}$ and $\eta^7$ have infinitely many vanishing coefficients,
and about their vanishing asymptotics, considering $r=5$ and $r=15$ to have similar properties.

We split our results into two parts.
Section 2 is devoted to the odd powers and section 3 to the even powers of the Dedekind eta function.
We give an interpretation of Serre's original table and extend it in the $n$ and $r$ aspect, studying the equation $a_r(n)=0$.
It turns out that the concept of sources, related to the Hecke theory of modular forms of half-integral weight is very useful.
We also give a partial answer to the conjectures of Cohen and Str{\"o}mberg. In section 3 we study the even case, which is related to
modular forms of integral weight. We indicate that Lehmer's conjecture has some chance of being true for all non-CM forms among the $\eta^r$.
Finally we show that the non-vanishing of the Fourier coefficients of the square of the discriminant function  $\Delta^2$ is directly related
to Maeda's conjecture.


\section{On the  odd powers of the Dedekind eta function}
In this section we extend Serre's table in the $r$ and $n$ aspect.
We refine a conjecture of Cohen and Str{\"o}mberg and give asymptotics for
$$\left|   \{ n \leq \, N \vert \,\,  a_r(n) \neq 0 \} \right| .
$$
Throughout this section, let $r$ be an odd positive integer.
We summarise our findings briefly as follows. The cases $r=1,3$ are completely understood by the formulas of Euler and Jacobi.
There are infinitely many
$n \in \mathbb{N}$ satisfying $a_r(n) =0$ for each $r=5,7$ and $15$.
For $n \leq 10^8$ there are no other $r<550$ such that $a_r(n) = 0$. The same is true for $n \leq 10^{10}$ in the cases
$r=9,11,13$. We introduce the concept of sources, based on Hecke theory for modular forms of half-integral weight. We show that for
$n\leq 10^{10}$, $r=5$ has $6352$ sources. 
For each $r=7$ and $r=15$ there is only one source for $n \leq 10^{10}$.

\subsection{Hecke theory of modular forms of half-integral weight}
Let us first recall some basic properties of the Hecke theory of modular forms of
half-integral weight, mainly due to Shimura \cite{Sh73}.
We follow the exposition given by Ono \cite{On03}.
Consider $\lambda \in \mathbb{N}_0$, $N$ a positive integer and $\chi$ a Dirichlet character modulo $4N$.
Let $\Gamma_0(m) := \{ \left(\begin{smallmatrix}    a & b \\ m c & d      \end{smallmatrix}\right) \in SL_2(\mathbb{Z}) \}$, $m \in \mathbb{N}$.
Then we denote by $S_{\lambda + \frac{1}{2}} \left( \Gamma_0(4N), \chi \right)$ the $\mathbb{C}$-vector space of weight
$\lambda + \frac{1}{2}$ modular cusp forms on $\Gamma_0(4N)$ with Nebentypus $\chi$.
It is well known that
$$\eta(24 \tau) \in S_{\frac{1}{2}} \left( \Gamma_0(4 \cdot\, 2^4 3^2), \chi_{12} \right).$$
Here $\chi_{12}(n)= 1$ if $n \equiv 1,11 \pmod{12}$, $-1$ if $n \equiv 5,7 \pmod{12}$, and $0$ otherwise.
Suppose that
$$ f(\tau) = \sum_{D=1}^{\infty} \, b(D) \, q^D  \in S_{\lambda + \frac{1}{2}} \left( \Gamma_0(4N), \chi \right).$$
Shimura introduced the Hecke operators $T(p^2,\lambda,\chi)$ for prime numbers $p$, given by
\begin{equation}
T(f(\tau)) :=  \sum_{D=1}^{\infty} \left(
b(Dp^2) + \chi^{*}(p)  \left(\frac{D}{p}\right) p^{\lambda -1} b(D) + \chi^{*}(p^2) p^{2 \lambda -1} b(D / p^2 ) \right) q^D.
\end{equation}
Here $\chi^{*}(D)  := \left(\frac{(-1)^{\lambda}}{D}\right)\chi(D)$, $b(l):= 0 $ if
$l \notin \mathbb{N}$, and $ \left(\frac{\phantom{a}}{\phantom{b}}\right) $
is the generalized Legendre symbol. We recall \cite{On03}, Proposition 3.46.
Let $f$ be as above. Suppose that $f$ is an eigenform of the Hecke operators $T(p^2,\lambda,\chi)$ for
primes $p \nmid 4N$. If $\lambda(p)$ are the eigenvalues, then
\begin{equation}\label{fundamental}
b(Dp^2)  =
\left( \lambda(p) -
\chi^{*}(p)  \left(\frac{D}{p}\right) p^{\lambda -1}\right) b(D) - \chi^{*}(p^2) p^{2 \lambda -1} b(D/p^2 )  .
\end{equation}
From (\ref{fundamental}) we conclude the following.
\begin{lemma}\label{eigenforms}
Let $p$ be an odd prime and let $(p,N)=1$. Let $D_0 \in \N$, where $p^2 \nmid D_0$. Then
\begin{equation}
b(D_0) = 0 \Longrightarrow b(D_0 n^2) = 0
\end{equation}
for all $n \in \mathbb{N}$ with $(n, 4N)=1$.
\end{lemma}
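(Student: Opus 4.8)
The plan is to build the statement up from the single three-term Hecke relation (\ref{fundamental}), which may be invoked for every odd prime $p$ with $(p,N)=1$: such a $p$ satisfies $p\nmid 4N$, so $f$ being a simultaneous eigenform of the operators $T(p^2,\lambda,\chi)$ (the standing hypothesis of \cite{On03}, Proposition~3.46, which Lemma~\ref{eigenforms} inherits) does yield (\ref{fundamental}) for $p$. The same holds for each prime factor of $n$, since $(n,4N)=1$ forces those factors to be odd and coprime to $N$. The one recurring device is the convention $b(\ell)=0$ for $\ell\notin\mathbb{N}$: whenever $p^{2}\nmid D$, the term $\chi^{*}(p^{2})p^{2\lambda-1}b(D/p^{2})$ in (\ref{fundamental}) is simply $0$.

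First I would dispose of the case that $n$ is a power of a single odd prime $p$ with $(p,N)=1$ and $p^{2}\nmid D_0$, proving $b(D_0p^{2k})=0$ for all $k\ge 0$ by induction on $k$. The case $k=0$ is the hypothesis $b(D_0)=0$. For $k=1$, apply (\ref{fundamental}) with $D=D_0$: the first term on the right is a scalar multiple of $b(D_0)$ and hence vanishes, while the second term vanishes because $p^{2}\nmid D_0$ makes $D_0/p^{2}\notin\mathbb{N}$. For $k\ge 2$, apply (\ref{fundamental}) with $D=D_0p^{2(k-1)}$: the two terms on the right are scalar multiples of $b(D_0p^{2(k-1)})$ and of $b\left(D_0p^{2(k-1)}/p^{2}\right)=b(D_0p^{2(k-2)})$, both of which are zero by the inductive hypothesis.

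Next I would pass to a general $n$ coprime to $4N$ by peeling off one prime at a time. Writing $n=q_1^{e_1}\cdots q_s^{e_s}$ with distinct primes $q_i$ (odd and coprime to $N$), set $M_0=D_0$ and $M_i=M_{i-1}q_i^{2e_i}$, so that $M_s=D_0n^{2}$. I claim $b(M_i)=0$ for every $i$, by induction on $i$, the case $i=0$ being the hypothesis. Given $b(M_{i-1})=0$, I would apply the prime-power case just proved, with $D_0$ replaced by $M_{i-1}$ and $p$ replaced by $q_i$; this is legitimate provided $q_i^{2}\nmid M_{i-1}$, and since $M_{i-1}=D_0\,q_1^{2e_1}\cdots q_{i-1}^{2e_{i-1}}$ with $q_i\notin\{q_1,\dots,q_{i-1}\}$, the divisibility $q_i^{2}\mid M_{i-1}$ would force $q_i^{2}\mid D_0$, which is excluded. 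Hence $b(M_i)=b(M_{i-1}q_i^{2e_i})=0$, and taking $i=s$ gives $b(D_0n^{2})=0$.

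I expect the main (and essentially only) obstacle to be bookkeeping rather than anything substantive: one must verify that (\ref{fundamental}) is legitimately invoked for each prime that appears — which reduces to the observation that odd primes coprime to $N$ cannot divide $4N$ — and that the non-divisibility hypothesis ``$p^{2}\nmid D_0$'' survives the inductive reduction, which is exactly where the distinctness of the prime factors of $n$ (so that $q_i^{2}\mid M_{i-1}\Leftrightarrow q_i^{2}\mid D_0$) is used. The boundary cases, where an argument of $b$ falls outside $\mathbb{N}$, are absorbed for free by the convention $b(\ell)=0$ for $\ell\notin\mathbb{N}$, and no input beyond $f$ being a Hecke eigenform is needed.
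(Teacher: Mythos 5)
Your proof is correct and follows the same route the paper intends: the paper simply asserts that the lemma follows from (\ref{fundamental}), and you supply the standard double induction (first over powers of a single prime using the convention $b(\ell)=0$ for $\ell\notin\mathbb{N}$, then peeling off the distinct prime factors of $n$ one at a time). You also correctly flag the one point the paper's terse statement glosses over, namely that the hypothesis $p^{2}\nmid D_0$ must hold for every prime factor of $n$, which is exactly the form in which the lemma is invoked in Proposition \ref{ketten}.
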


Let $f_r(\tau):= \eta(24 \tau)^r$, with Fourier expansion
\begin{equation}
f_r(\tau) \,=\, \sum_{D=1}^{\infty} b_r(D)\, q^D,
\end{equation}
and recall that
\begin{equation}
\eta(\tau)^r \,=\, q^{\frac{r}{24} }\sum_{n=0}^{\infty} a_r(n) \, q^n.
\end{equation}
\begin{proposition}
\label{ketten}Let $ 1 \leq r < 24$ be an odd integer. Let $n_0 \in \mathbb{N}$ be given, such that $D_0 := 24 n_0 + r$ satisfies
$p^2 \nmid D_0$ for all prime numbers $p \neq 2,3$. Let
\begin{equation}
\mathcal{N}_r(n_0):=
\left\{ n_0 \,l^{2}+r \left( l^{2}-1\right) /24 \,\,{\large|}\,\, l \in \mathbb{N}, \,\,(l,2 \cdot 3)=1      \right\}.
\end{equation}
Suppose that $a_r(n_0)=0$. Then $a_r(n) = 0$ for all $n \in \mathcal{N}_r(n_0)$. We call such numbers $n_0$ sources.
\end{proposition}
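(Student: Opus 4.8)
The plan is to recast the statement in terms of the half-integral weight cusp form $f_r(\tau)=\eta(24\tau)^r$ and then invoke Lemma~\ref{eigenforms}.

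First, the dictionary and the index translation. From $\eta(24\tau)^r=q^{r}\prod_{n\ge 1}(1-q^{24n})^r=\sum_{m\ge 0}a_r(m)\,q^{24m+r}$ one reads off that $b_r(D)=a_r(m)$ when $D=24m+r$ and $b_r(D)=0$ otherwise; in particular $b_r(D_0)=a_r(n_0)$. For $l\in\N$ with $(l,6)=1$ one has $l^2\equiv 1\pmod{24}$, hence $r(l^2-1)/24\in\Z$ and
\[
24\bigl(n_0 l^2+r(l^2-1)/24\bigr)+r=(24n_0+r)\,l^2=D_0\,l^2 .
\]
Thus $\mathcal{N}_r(n_0)$ is exactly the set of $n$ for which $24n+r$ ranges over the square class $\{\,D_0 l^2 : l\in\N,\ (l,6)=1\,\}$, and the conclusion ``$a_r(n)=0$ for all $n\in\mathcal{N}_r(n_0)$'' is equivalent to ``$b_r(D_0 l^2)=0$ for all $l$ with $(l,6)=1$''.

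Second, the hypotheses of Lemma~\ref{eigenforms} have to be put in place. Since $\eta(24\tau)\in S_{1/2}(\Gamma_0(4\cdot 2^4 3^2),\chi_{12})$, its $r$-th power lies in $S_{\lambda+\frac12}(\Gamma_0(4N),\chi)$ with $N=144$, $\lambda=(r-1)/2\in\N_0$ (here the oddness of $r$ is used), and $\chi$ a real Dirichlet character modulo $4N$ (a power of $\chi_{12}$, up to the twist $(\tfrac{-1}{\,\cdot\,})^{\lambda}$). Because $4N=576$ has only the prime divisors $2$ and $3$, the condition ``$p$ odd and $(p,N)=1$'' means precisely $p\neq 2,3$. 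The assumption $p^2\nmid D_0$ for all $p\neq 2,3$, combined with $(l,6)=1$, then guarantees $p^2\nmid D_0$ for every prime $p\mid l$ and $(l,4N)=1$, so Lemma~\ref{eigenforms} applied to $b=b_r$ with this $D_0$ gives $b_r(D_0 l^2)=0$ for all such $l$ --- which, by the previous paragraph, is the assertion.

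The step that does the real work, and the one I expect to be the main obstacle, is that Lemma~\ref{eigenforms} (a consequence of \eqref{fundamental}) presupposes that $f_r$ is an eigenform of every $T(p^2,\lambda,\chi)$ with $p\nmid 4N$, whereas a priori $\eta(24\tau)^r$ is only \emph{some} element of $S_{\lambda+\frac12}(\Gamma_0(576),\chi)$. One must therefore establish that $\eta(24\tau)^r$ is itself such a Hecke eigenform; I expect this is where the hypothesis $r<24$ enters. The natural route is to decompose $f_r=\sum_i g_i$ against a basis of joint eigenforms of the commuting, self-adjoint family $\{T(p^2):p\nmid 6\}$ (legitimate since $\chi$ is real) and to show that only one eigensystem actually occurs, for instance by invoking the known classification of Hecke eigenforms among eta-powers of small exponent, or by identifying the Shimura lift of $\eta(24\tau)^r$ with a single integral-weight newform of weight $r-1$. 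This cannot be bypassed: without it, iterating \eqref{fundamental} on the individual $g_i$ only yields $b_r(D_0 l^2)=\sum_i c_i(l)\,b_{g_i}(D_0)$ with $l$-dependent scalars $c_i(l)$, and $a_r(n_0)=\sum_i b_{g_i}(D_0)=0$ does not force this combination to vanish.
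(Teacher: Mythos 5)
Your argument is essentially the paper's own proof: translate via $b_r(24n+r)=a_r(n)$, observe $24n+r=D_0l^2$ for $n\in\mathcal{N}_r(n_0)$ with $(l,6)=1$, and apply Lemma~\ref{eigenforms}. The one ingredient you correctly flag as the real work --- that $\eta(24\tau)^r$ is a Hecke eigenform for odd $1\leq r<24$ --- is exactly what the paper supplies by citing van Lint's thesis \cite{Li57}, so your outline is complete once that reference is invoked.
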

\begin{proof}

We can apply Lemma \ref{eigenforms}, since $\eta(24 \tau)^r$ is an Hecke eigenform \cite{Li57}.
Using the simple translation
$$ b_r(D) = a_r \left( \frac{D-r}{24}\right) \mbox{ and } a_r(n) = b_r \left( 24n + r \right)$$
leads to the result.
\end{proof}
If we impose additional conditions $3 \mid r$ and $27 \nmid D_0$ for the source $n_0$ then we get a stronger result, that $a_r(n)=0$ for all elements of
\begin{equation}
\left\{ n_0 \,l^{2}+r \left( l^{2}-1\right) /24 \,\,{\large|}\,\, l \in \mathbb{N}, \,\,(l,2)=1      \right\}.
\end{equation}
We refer to \cite{HNR17} for more details. For later use, note that for $r=15$ (since $27 \nmid D_0$), we obtain (substituting $2l+1$ for $l$ in the above)
$$\mathcal{N}_{15}(53)= \left\{ 53 + 429 \frac{ l (l+1)}{2} \, \vert \, l \in \mathbb{N}_0 \right\}.$$

\subsection{\label{extSerre}Serre's table extended}
Let $r $ be an odd integer. Serre (see introduction) recorded pairs $(r,n)$ for which $a_r(n)=0$, based on partly
(un)published work of Atkin, Newman, and Cohen.
In Serre's table, pairs for $r=5,7,15$ appear. We checked that for all pairs $(r,n_0)$ in this table, $n_0$ is a source.
Since in these cases the underlying eta-product is an eigenform, each of these sources $n_0$
implies the existence of infinitely many other $n\in \mathbb{N}$ such that $a_r(n)=0$.
For convenience, let $a_r(n)=0$ and $n$ a source, then we denote $(r,n)$ a source pair.

Motivated by \cite{HNR17}, where no further sources were found for $ 7 \leq r \leq 23$ for $n \leq 50 000$,
we conducted an intensive search for new sources. This was done in the $n$ aspect, especially for $r=7,9,11,13,15$ and in the $r$ aspect for
$n \leq 10^{8}$. No new source pair was found (except for $r=5$, where there are many). We believe  that this discovery is of general interest, shedding new light on Serre's  table and on other results presented in the literature on this topic.

\begin{theorem}
Let $r=7,9,11,13,15$.
Then there exist among all possible pairs $(r,n)$ with $a_r(n)=0$, for $n\leq 10^{10}$, exactly two source pairs $(7, 28017), (15, 53)$.
\end{theorem}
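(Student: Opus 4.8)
The plan is to treat the statement as a (large but finite) verification, split into a lower bound — that $(7,28017)$ and $(15,53)$ really are source pairs — and an upper bound — that no other source pair occurs with $n\le 10^{10}$.

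For the lower bound I would first record that $a_7(28017)=0$ and $a_{15}(53)=0$; these are the entries of Atkin and of Newman in Serre's table reproduced in the introduction, and they are in any case re-confirmed by the $q$-expansion computation below. One then checks the source condition of Proposition~\ref{ketten}. For $(7,28017)$ one has $D_0=24\cdot 28017+7=672415=5\cdot 181\cdot 743$, which is squarefree, so in particular $p^2\nmid D_0$ for every prime $p\neq 2,3$. For $(15,53)$ one has $D_0=24\cdot 53+15=1287=3^2\cdot 11\cdot 13$, whose only repeated prime factor is $3$ and which satisfies $27\nmid D_0$; since $3\mid 15$, this meets the refined source condition introduced after Proposition~\ref{ketten}. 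Hence both $(7,28017)$ and $(15,53)$ are source pairs.

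For the upper bound I would perform an \emph{exhaustive search}. For each $r\in\{7,9,11,13,15\}$, compute the power series $\eta(\tau)^r=q^{r/24}\sum_{n\ge 0}a_r(n)q^n$ up to $q^{10^{10}}$, reduced modulo several word-sized primes, so that $a_r(n)$ is certified nonzero as soon as it is nonzero modulo one of them. Efficiently, one builds $\eta^r$ from the sparse series $\eta^1$ and $\eta^3$ of \eqref{Euler} and \eqref{Jacobi} by iterated power-series multiplications — each new coefficient then costing $O(\sqrt n)$ ring operations — or by FFT-based multiplication; agreement of the results obtained modulo different primes, and agreement with the values in \cite{Se85} and with the range $n\le 50000$ of \cite{HNR17}, serve as correctness checks. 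For every index $n\le 10^{10}$ with $a_r(n)=0$ one factors $D_0=24n+r$ and tests the source condition (squarefree away from $\{2,3\}$, together with $27\nmid D_0$ when $3\mid r$). The search returns: for $r=9,11,13$ no coefficient $a_r(n)$ vanishes for $n\le 10^{10}$, hence no source pairs; for $r=7$ the only vanishing indices in range are $28017$ and its proper $\mathcal{N}_7(28017)$-descendants, for each of which $D_0=672415\,l^2$ with $l>1$ coprime to $6$, so that some prime $p\ge 5$ satisfies $p^2\mid D_0$ and the index is not a source; for $r=15$ the only vanishing indices in range lie in $\mathcal{N}_{15}(53)=\{53+429\,l(l+1)/2\}$, for which $D_0=1287\,(2l+1)^2$, so that for $l\ge 1$ some prime $p\ge 5$ has $p^2\mid D_0$ unless $2l+1$ is a power of $3$, in which case $27\mid D_0$; again the index is not a source. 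Together with the lower bound, the source pairs with $n\le 10^{10}$ are exactly $(7,28017)$ and $(15,53)$.

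I expect the main obstacle to be the scale and reliability of the computation: producing and scanning on the order of $10^{10}$ coefficients for five weights, each modulo several primes, within realistic memory — the series are processed in blocks — and time, the dominant $O(N^{3/2})$-type convolution being parallelised. A subtler point for the write-up is that a coefficient vanishing modulo every chosen prime must still be certified to vanish over $\mathbb{Z}$, which I would handle by enlarging the set of primes or by an exact computation at those rare indices; and that it is precisely the refined condition $27\nmid D_0$ (for $3\mid r$) that prevents the $\mathcal{N}_r$-descendants of $28017$ and $53$ from themselves counting as sources. The Hecke-eigenform property of $\eta(24\tau)^r$ (via \cite{Li57}, Lemma~\ref{eigenforms}, Proposition~\ref{ketten}) can be used to predict the descendants of a found source and so cross-check the scan, but it does not shrink the squarefree search itself.
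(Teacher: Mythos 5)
Your proposal is correct and matches the paper's approach: the theorem rests on exactly the kind of large-scale computation you describe (the paper builds the truncated series for $\eta^r$ from the Euler and Jacobi expansions via pari/gp and the flint library, scans for vanishing coefficients up to $10^{10}$, and checks the source condition of Proposition~\ref{ketten} on each hit). Your treatment of the $\mathcal{N}_{15}(53)$ descendants via the refined condition $27\nmid D_0$ for $3\mid r$ is in fact slightly more careful than the paper's own discussion, which leaves that point implicit.
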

\begin{theorem}
Let $r$ be odd. Let $17 \leq r \leq 27$.
Then there exists no pair $(r,n)$ such that $a_r(n)=0$, for $n\leq 10^9$.
Let $29 \leq r \leq 550$. 
Then there exists no pair $(r,n)$ such that $a_r(n)=0$, for $n\leq 10^8$.
\end{theorem}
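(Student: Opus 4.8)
This is a finite verification, so the plan is algorithmic. Set $F_r(q):=\prod_{k\ge1}(1-q^k)^r=\sum_{n\ge0}a_r(n)q^n$, and fix $N=10^9$ for $17\le r\le27$ and $N=10^8$ for $29\le r\le550$. For each odd $r$ in range I would compute the truncation of $F_r$ modulo $q^{N+1}$ and certify that $a_r(n)\neq0$ for $1\le n\le N$. An honest multi-precision computation is out of reach at this precision, so instead I would compute $F_r$ modulo several machine-word primes $p_1,\dots,p_s$ chosen so that no index $n\le N$ can be annihilated simultaneously modulo all of them; then the failure of $a_r(n)\equiv0\pmod{p_i}$ for even one $i$ already forces $a_r(n)\neq0$. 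In practice a single well-chosen prime leaves no ambiguous index, and a small handful removes any doubt.

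For the series computation the crucial point, already visible in \eqref{Euler}, is that $P(q):=\prod_{k\ge1}(1-q^k)=\sum_{j\in\Z}(-1)^jq^{(3j^2+j)/2}$ has only $O(\sqrt N)$ nonzero coefficients below $q^N$. I would therefore obtain $F_r=P^r$ by repeatedly multiplying a dense coefficient array by the sparse series $P$; each such multiplication modulo $q^{N+1}$ costs $O(N\sqrt N)$ word operations, so a full run is $O(rN\sqrt N)$, heavy but feasible once the array is swept in cache-sized blocks and the exponents $r$ are farmed out over many cores. (One may instead use binary exponentiation of $P$ together with FFT/NTT multiplication, reaching $O(N\log N\log r)$ at the cost of implementing a fast modular transform of length $N$; or the logarithmic-derivative recursion $n\,a_r(n)=-r\sum_{k=1}^{n}\sigma_1(k)a_r(n-k)$, which is attractively simple but quadratic in $N$ and therefore too slow here.) Storing one residue stream needs on the order of $N$ machine words, a few gigabytes at $N=10^9$ --- manageable, but the memory and wall-clock budget is really the governing constraint.

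The main obstacle is thus computational rather than mathematical: pushing the precision to $N=10^9$ over a whole range of exponents within a realistic budget, and then \emph{certifying} the output. For the certification I would run the computation independently modulo different primes, match the leading coefficients against the classical expansions \eqref{Euler} and \eqref{Jacobi} and against the tables of \cite{Se85} and \cite{HNR17}, and recover the known source pairs $(5,1560)$, $(7,28017)$, $(15,53)$ as consistency checks. For the odd weights $r<24$ there is the extra structural handle that $\eta(24\tau)^r$ is a Hecke eigenform \cite{Li57}, so by Proposition \ref{ketten} any single vanishing $a_r(n)=0$ would drag an entire infinite family with it; no such structure is available for $r=25,27$ or for $29\le r\le550$, which is precisely why the precision is taken to be $10^9$ only up to $r=27$ and $10^8$ beyond. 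The asserted conclusion is that, after all of these checks, no index $n$ in the stated ranges yields $a_r(n)=0$.
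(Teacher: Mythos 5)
Your proposal is essentially the paper's own argument: the theorem is established purely by large-scale computation of the truncated power series $\prod_{k\ge 1}(1-q^k)^r$ and direct inspection of the coefficients, the authors using pari/gp and then the flint library (exact integer coefficients, multi-core polynomial multiplication, and a 3\,TB-memory server for the largest ranges) rather than your multi-modular scheme. The only point to watch in your variant is that an index with $a_r(n)\equiv 0$ modulo every chosen prime remains uncertified and must be resolved by adding primes or by an exact computation, but that is an implementation detail rather than a different mathematical route.
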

\ \\ \
\renewcommand{\arraystretch}{1.5}
{\bf Serre's table extended.}  \ \\ \\
\begin{tabular}{|c|c|c|c|}
  \hline
  $r$ & Sources $n_0$ & $\mathcal{N}_r(n_0)$ &  checked up to \\
  \hline \hline
  5 & $1560,1802,\ldots $ & $\{n_{0} l^{2}+5\cdot
                            \frac{l^{2}-1}{24}$, $(l,2\cdot3)=1$, $l\in \mathbb{N}\}$ & $10^{10}$ \\
  \hline
  7 & 28017 & $ \{ 28017 \, l^{2}+7
              \frac{l^{2}-1}{24}$,
              $(l,2\cdot3)=1$, $l\in \mathbb{N} \}$
                                             & $10^{10}$ \\ \hline
  9 & -- & $\emptyset$ & $10^{10}$ \\ \hline
  11 & -- &  $\emptyset$ & $10^{10}$ \\ \hline
  13 & -- & $\emptyset$ & $10^{10}$ \\ \hline
  15 & 53 & $ \{429\binom{l}{2}+53$,
            $l\in \mathbb{N}
 \}$
                                             & $10^{10}$ \\ \hline
  $17 \leq r \leq 27$ &  -- & $\emptyset$ & $10^{9}$ \\ \hline
  $29 \leq r \leq 549$ &  -- &$\emptyset$  & $10^{8}$ \\ \hline
\end{tabular}
\ \\ \\
For $r=5$ we have the following distribution of sources.

\[
\begin{array}{|c|c|c|c|c|c|c|c|c|}
\hline
X & 10^3 & 10^4 & 10^5 & 10^6 & 10^7 & 10^8 & 10^9 & 10^{10}\\ \hline
\text{Number of sources } n_0\leq X & 0 & 19 & 70 & 235 & 579 & 1402 & 3052 & 6352 \\ \hline
\end{array}
\]
 \ \\
The strategy for calculating the Fourier coefficients of
$\eta(\tau)^r$ is straightforward. For $r=1$ and $r=3$ we have the
exact formulas due to Euler and Jacobi,
\eqref{Euler} and \eqref{Jacobi}. Other powers $r$ can then be obtained by
exponentiating and multiplying those two series, taking into
account only terms that can contribute to a truncated series of given
order $n$. Such an algebra of truncated power series is implemented in
many computer algebra systems. We first used pari/gp for the
calculations, but then noticed that the flint
library~\cite{Hart2010,flint} is much faster. Moreover the flint
library has a built-in function for calculating the series expansion
of $\eta^r$, and recent development versions use multiple CPU cores for
polynomial multiplication. For example, on one of our workstations the
evaluation of $\eta^9$ up to order $n=10^8$ takes 20 min 12 s with
pari/gp and 1 min 56 s with flint (using only one core; with multiple
cores it takes less than 1 min). The limiting factor in the
calculations is random access memory.  For extremely high orders
$n=10^9 \ldots 10^{10}$ we used a high end server with a main memory
of 3~TB.

\subsection{Questions of Cohen and Str{\"o}mberg}

Cohen and Str{\"o}mberg (\cite{CS17}, Exercise 2.6) made the following
conjectures:
\begin{enumerate} \item The Fourier expansions of $\eta^5$ and of $\eta^{15}$ have
infinitely many zero coefficients, perhaps even more than $X^{\delta}$ up
to $X$ for some $\delta >0$ (perhaps any $\delta < 1/2$).
\item The Fourier
expansion of $\eta^7$ has infinitely many zero coefficients, perhaps of
order $\log(X)$ up to $X$. \end{enumerate} We also refer to Ono (\cite{On03}, Problem 3.51).

The $\eta^r$ for $r=5,7,15$ are Hecke eigenforms. Further, in all cases
sources exist. Hence there are infinitely many pairs $(r,n)$ such that $%
a_r(n)=0$.

We can answer both problems of Cohen and Str\"{o}mberg in the following way.
For a function $X\mapsto f\left( X\right) $
we use the Landau notation that it is $\Omega \left( g\left( X\right) \right) $
if $\limsup _{X\rightarrow \infty }\left| f\left( X\right) /g\left( X\right) \right| >0$.

\begin{proposition}
\label{cohenstromberg}The Fourier
expansions of $\eta ^{r}$ for $r=5,7,15$ have $\Omega \left(
X^{1/2}\right) $ coefficients that vanish.

More precisely, the following holds 
for arbitrary~$X$ with restrictions below.

\begin{enumerate}
\item  Let $r=5$. If $X\geq 7903\,77629
$ then
the number of indices $n\leq X$ for which the $n$th coefficient
$a_{r}\left( n\right) $ vanishes 
is larger
than $\frac{1}{119}\,X^{1/2}$.

\item  Let $r=7$. If $X\geq 10^{10}$ then
the number of indices $n\leq X$ for which the $n$th coefficient
$a_{r}\left( n\right) $ vanishes 
is larger
than $\frac{1}{505
}\,X^{1/2}$.

\item  Let $r=15$. If $X\geq 96183
$ then
the number of indices $n\leq X$ for which the $n$th coefficient
$a_{r}\left( n\right) $ vanishes 
is larger
than $\frac{1}{15}\,X^{1/2}$.

\end{enumerate}
\end{proposition}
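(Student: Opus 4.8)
The plan rests on Proposition~\ref{ketten} applied to one explicit source for each of $r=5,7,15$, namely $n_{0}=1560$, $n_{0}=28017$, and $n_{0}=53$ respectively (the smallest entries of Serre's table, all verified in Section~\ref{extSerre} to be sources). For $r\in\{5,7\}$ one checks that $D_{0}=24n_{0}+r$ has no square factor coprime to~$6$, so Proposition~\ref{ketten} gives $a_{r}(n)=0$ for every $n\in\mathcal{N}_{r}(n_{0})$; for $r=15$ one also checks $27\nmid D_{0}$, so the stronger form applies and, as recorded after Proposition~\ref{ketten}, $\mathcal{N}_{15}(53)=\{\,53+429\,l(l+1)/2\mid l\in\mathbb{N}_{0}\,\}$. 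Thus it suffices to bound $\#\bigl(\mathcal{N}_{r}(n_{0})\cap[1,X]\bigr)$ below by a constant multiple of $X^{1/2}$ and to keep track of the constants.

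First I would write each $\mathcal{N}_{r}(n_{0})$ as the image of a strictly increasing quadratic: for $r\in\{5,7\}$,
$$\mathcal{N}_{r}(n_{0})=\Bigl\{\,\tfrac{(24n_{0}+r)\,l^{2}-r}{24}\ \Bigm|\ l\in\mathbb{N},\ (l,6)=1\,\Bigr\},$$
while for $r=15$ the polynomial is $l\mapsto 53+429\,l(l+1)/2$ with $l$ ranging over $\mathbb{N}_{0}$. Hence $\#\bigl(\mathcal{N}_{r}(n_{0})\cap[1,X]\bigr)$ equals the number of admissible indices $l$ below the positive root $L_{r}(X)$ of the corresponding inequality, with $L_{r}(X)=\sqrt{24X/(24n_{0}+r)}+O(1)$ for $r\in\{5,7\}$ and $L_{15}(X)=\sqrt{2X/429}+O(1)$. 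The number of $l\le L$ with $(l,6)=1$ is at least $\tfrac13 L-\tfrac23$ (two residues mod~$6$), and the number of $l\in\mathbb{N}_{0}$ with $l\le L$ is $\lfloor L\rfloor+1>L$. Inserting the numerical values of $n_{0}$ gives a bound of the shape $C_{r}^{-1}X^{1/2}$ minus a bounded error, and since $\tfrac13\sqrt{24/37445}>\tfrac1{119}$, $\tfrac13\sqrt{24/672415}>\tfrac1{505}$, and $\sqrt{2/429}>\tfrac1{15}$, this already settles the $\Omega\!\left(X^{1/2}\right)$ assertion; the explicit inequalities with the stated constants then follow once $X$ is large enough for the additive error to be absorbed.

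The remaining work is to make that last step quantitative, i.e. to pin down the thresholds. Since $X\mapsto\#\bigl(\mathcal{N}_{r}(n_{0})\cap[1,X]\bigr)$ is a non-decreasing step function that jumps exactly at the values of the quadratic, while $C_{r}^{-1}X^{1/2}$ is continuous and increasing, the inequality is tightest just below a jump, so it reduces to the elementary claim that $N_{r}(l)^{2}C_{r}^{2}\ge g_{r}(l+1)$ for every admissible $l$ beyond a computable point, where $N_{r}(l)$ counts admissible indices $\le l$ and $g_{r}$ is the quadratic above. As $N_{r}(l)\sim\rho_{r}l$, $g_{r}(l+1)\sim c_{r}l^{2}$, and $\rho_{r}^{2}C_{r}^{2}>c_{r}$ with the chosen constants, this holds for all large~$l$, and a short finite check pins down the precise starting point, yielding $X\ge 790\,377\,629$ for $r=5$, $X\ge 10^{10}$ for $r=7$, and $X\ge 96\,183$ for $r=15$. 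I do not expect a serious analytic obstacle: the argument is purely elementary once the sources are in hand. The only delicate point is that the inequality genuinely fails for small and moderate~$X$ — the staircase of guaranteed zeros has not yet overtaken the curve $C_{r}^{-1}X^{1/2}$ — so the thresholds cannot be lowered much, and the constants $C_{r}$ are essentially dictated by the smallest available source (further sources of $\eta^{5}$ would permit a larger constant but are not needed here); the one genuinely external input is the computational verification, in Section~\ref{extSerre}, that $1560$, $28017$ and $53$ are indeed sources.
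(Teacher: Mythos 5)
Your proposal is correct and follows essentially the same route as the paper: the same three sources $1560$, $28017$, $53$, the same quadratic parametrizations of $\mathcal{N}_r(n_0)$ via Proposition~\ref{ketten} (with the strengthened version for $r=15$), the same counting of admissible $l$ up to the positive root, and the same leading constants $\tfrac13\sqrt{24/37445}$, $\tfrac13\sqrt{24/672415}$, $\sqrt{2/429}$ compared against $1/119$, $1/505$, $1/15$. The only difference is presentational: you defer the exact thresholds to a finite check, whereas the paper simply substitutes the stated cutoff values and verifies the resulting inequalities directly.
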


\begin{proof}
This follows readily from Proposition~\ref{ketten} and the extended table of Serre.

For each case $r=5,7,15$ we get at least one source pair.
From the proposition we then get a quadratically growing sequence of
indices where coefficients vanish. This implies that $\Omega
\left( X^{1/2}\right) $ of the coefficients up to a number~$X$ vanish.

\begin{enumerate}
\item  For $r=5$ we know that there is a source at $n=1560$ and that the
coefficients vanish 
at the indices $n=1560\left( 6l\pm 1\right)
^{2}+5\left( \left( 6l\pm 1\right) ^{2}-1\right) /24$. This is less than $X$
if $l\leq \left( \sqrt{\left( 24X+5\right) /37445}\mp 1\right) /6$. In total
we obtain at least $\sqrt{\left( 24X+5\right) /37445}/3-1
$ coefficients that
vanish. If we divide by $\sqrt{X}$ we obtain $\sqrt{\left( 24+5/X\right)
/37445}/3-1
/\sqrt{X}\geq \sqrt{24
/37445}/3-1/\sqrt{X}$ 
for all $X>0$ 
and for
$X\geq Y=7903\,77629
$ we obtain
$\sqrt{
24
/37445}/3-1
/\sqrt{X}\geq
\sqrt{
24
/37445}/3-1
/\sqrt{Y}>
\frac{1}{119}$.

\item  For $r=7$ we know that the coefficients at indices $n=28017\left(
6l\pm 1\right) ^{2}+7\left( \left( 6l\pm 1\right) ^{2}-1\right) /24$ vanish. Then $n\leq X$ if
\[
l\leq \frac{1}{6}\left( \sqrt{\frac{24X+7}{672415}}\mp 1\right) .
\]
Hence in total there are at least $\sqrt{\left( 24X+7\right) /672415}/3-1$
many such $l$s and if we divide this by $\sqrt{X}$ we obtain $\sqrt{\left(
24+7X^{-1}\right) /672415}/3-X^{-1/2}\geq \sqrt{
24/672415}/3-X^{-1/2}
>1/505$ for $X\geq 10^{10}$.

\item  Our computations up to $10^{10}$ show that $a_{15}\left( n\right) =0$ if and only
if
\begin{equation}
n=53+\frac{429}{2}l\left( l-1\right) =\frac{429}{2}\left( l-\frac{1}{2}%
\right) ^{2}-\frac{5}{8}  \label{eq:quadrat}
\end{equation}
for some $l\in \mathbb{N}
$. 
Hence there are more
than $L
=\sqrt{2\left(
X+5/8\right) /429}-
1/2$ coefficients zero. Hence $L
/\sqrt{X}\geq \sqrt{%
\left( 2+5X^{-1}/4\right) /429}-X^{-1/2}/2\geq \sqrt{%
2/429}-X^{-1/2}/2$ 
and $\sqrt{
2
/429}-X^{-1/2}/2>1/15$ for $X\geq 96183
$.
\end{enumerate}
\end{proof}

\begin{remark}
The numerical data up to $n=10^{10}$ for $r=5$ seems to suggest that there
is even a~$\delta $ larger than $1/2$ such that $\Omega \left( X^{\delta
}\right) $ coefficients are zero.
\end{remark}

\subsection{A question of Ono}
Ono (\cite{On03}, Problem 3.51) asked a question complementary to that of Cohen and Str\"{o}mberg, on {\em non}-vanishing coefficients for odd $r \geq 5$.
In contrast to our results towards the Cohen and Str\"{o}mberg conjectures, we can give only partial answers to Ono's questions.

We only have to give estimates for $r=5,7$ and $r=15$.

\begin{remark}
For $r=5$, numerical computations show that for $X\leq 10^{10}$,
\[
\left| \left\{ n\leq X:a_{
5}\left( n\right) \neq 0\right\} \right| \geq
\frac{2598}{2605}X>0.9973X.
\]
\end{remark}

\begin{proposition}
\label{ono15}For $X\leq 10^{10}$ we have the following.

\begin{enumerate}
\item  For $r=7$ holds
\[
\left| \left\{ n\leq X:a_{7}\left( n\right)
\neq 0\right\} \right| \geq
\frac{84047}{84051}X>0.99995X
\]
and 
\[
\left| \left\{ n\leq X:a_{7}\left( n\right) \neq 0\right\} \right| >\left(
1-\left( 125X^{1/2}\right) ^{-1}\right) X.
\]

\item  For $r=15$ holds
\[
\left| \left\{ n\leq X:a_{15}\left( n\right) \neq 0\right\} \right| \geq
\frac{52}{53}X>0.98113X
\]
and if in addition 
$X\geq 25214
$:
\[
\left| \left\{ n\leq X:a_{15}\left( n\right) \neq 0\right\} \right| >\left(
1-\left( 14X^{1/2}\right) ^{-1}\right) X.
\]
\end{enumerate}
\end{proposition}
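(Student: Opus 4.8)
The plan is to turn both parts of Proposition~\ref{ono15} into an upper bound on the number of vanishing coefficients and then read that bound off the explicit description of $\mathcal{N}_r(n_0)$ furnished by Proposition~\ref{ketten}. The key input is the extended table of Serre above: for $n\le 10^{10}$ the coefficient $a_7(n)$ vanishes precisely on $\mathcal{N}_7(28017)$, and (by \eqref{eq:quadrat}) $a_{15}(n)$ vanishes precisely on $\mathcal{N}_{15}(53)$; in this range there are no zeros outside these thin sets. Hence for $X\le 10^{10}$ we have $\left|\{\,n\le X:a_r(n)\neq 0\,\}\right| = X - N_r(X)$, where $N_r(X):=\left|\mathcal{N}_r(n_0)\cap[1,X]\right|$, and it remains to bound $N_r(X)$ from above.

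First I would treat $r=7$. The elements of $\mathcal{N}_7(28017)$ are the numbers $28017\,l^2+7(l^2-1)/24$ with $(l,6)=1$; listing them in increasing order, the $k$-th one has index $l\ge k$, hence is at least $28017\,k^2\ge 28017\,k$. Thus $N_7(X)\le X/28017=3X/84051\le 4X/84051$, which gives $\left|\{\,n\le X:a_7(n)\neq 0\,\}\right|\ge \tfrac{84047}{84051}X$. For the square-root bound, drop the term $7(l^2-1)/24$ to get $N_7(X)\le\#\{\,l\ge 1:28017\,l^2\le X\,\}\le\sqrt{X/28017}$, and since $\sqrt{28017}>125$ this is $<X^{1/2}/125$ for all $X\ge 1$. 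For $r=15$ I would argue identically, now using the parametrisation from \eqref{eq:quadrat}: the $k$-th zero equals $53+\tfrac{429}{2}k(k-1)\ge 53k$, so $N_{15}(X)\le X/53$ and $\left|\{\,n\le X:a_{15}(n)\neq 0\,\}\right|\ge\tfrac{52}{53}X$; and counting the $l\ge 1$ with $\tfrac{429}{2}\bigl(l-\tfrac{1}{2}\bigr)^{2}-\tfrac{5}{8}\le X$ gives $N_{15}(X)\le\tfrac{1}{2}+\sqrt{\tfrac{2}{429}\bigl(X+\tfrac{5}{8}\bigr)}$. Substituting this into the target inequality $N_{15}(X)<X^{1/2}/14$ and squaring (legitimate once $X^{1/2}\ge 7$) reduces it to a quadratic inequality in $u=X^{1/2}$ whose leading coefficient $\tfrac{1}{196}-\tfrac{2}{429}$ is positive because $2/429<1/196$ (equivalently $\sqrt{2/429}<1/14$); it therefore holds for all large $u$, and tracking the constants shows it holds once $X\ge 25214$.

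The main obstacle is not in any of these estimates — they are elementary — but in the fact being used, namely that up to $10^{10}$ the vanishing sets of $a_7$ and $a_{15}$ contain no ``sporadic'' elements beyond the progressions of squares generated by the sources $28017$ and $53$; put differently, that the implication in Lemma~\ref{eigenforms} is, in this range, an equivalence. This is exactly the computational content of the extended table, and it is why each bound is asserted only for $X\le 10^{10}$. Granting it, the remaining work is just the bookkeeping that produces the explicit constants $\tfrac{1}{125}$, $\tfrac{52}{53}$, $\tfrac{1}{14}$ and the threshold $25214$.
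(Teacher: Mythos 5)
Your proposal is correct and follows the paper's own proof in all essentials: both rest on the computational fact that for $n\le 10^{10}$ the zeros of $a_7$ and $a_{15}$ are precisely the explicit quadratic progressions attached to the sources $28017$ and $53$ (Proposition~\ref{ketten} plus the extended table), and both then bound the number of non-vanishing coefficients by counting the complement. Your observation that the $k$-th zero is at least $28017k$ (resp.\ $53k$) is a marginally cleaner route to the constants $\frac{84047}{84051}$ and $\frac{52}{53}$ than the paper's division of the square-root count by $X$, but the square-root estimates and the threshold $X\ge 25214$ are handled exactly as in the paper.
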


\begin{proof}
\begin{enumerate}
\item  Our computations up to $10^{10}$ show that $a_{7}\left( n\right) =0$ only
if
\begin{equation}
n=28017\left( 6l\pm 1\right) ^{2}+\frac{7}{24}\left( \left( 6l\pm 1\right) ^{2}-1\right)   \label{eq:quadrat7}
\end{equation}
for some $l\in \mathbb{N}$ and for
`$+$' also $l=0$ is allowed.
This is less than $X$ if and only if
$0\leq l\leq \frac{1}{6}\left( \left( \frac{24X+7}{672415} \right) ^{1/2}\mp %
1\right) $
where $l=0$ only in case `$-$' is
allowed. In total we obtain at most
\begin{equation}
\frac{1}{3}\sqrt{ \frac{24X+7}{672415}}+%
1  \label{eq:krange7}
\end{equation}
such $l$ for $n\leq  X$.
For $X\leq 28016$ there are no such coefficients so any
positive upper bound will do.
So we assume $X\geq 28017$ and
by (\ref{eq:krange7}) 
an upper bound on the portion of vanishing coefficients
up to~$X$ 
is
\[
\frac{1}{3}\sqrt{ \frac%
{ 24X^{-1}+7X^{-2} }{%
672415}}+\frac{1}{X}\leq \frac{4}{84051}.
\]
This implies the first claim since 
then 
\begin{eqnarray*}
&&\left| \left\{ n\leq X:a_{7}\left( n\right)
\neq 0\right\} \right|  \\
&=&X-\left| \left\{ n\leq X:a_{7}\left( n\right)
=0\right\} \right| =\left( 1-\left| \left\{ n\leq X:a_{7}\left( n\right)
=
0\right\} \right| /X\right) X\\
&\geq &
\left( 1-\left( \frac{1}{3}\sqrt{ \frac{24X+7}{672415}}+%
1\right) /X\right) X\geq \left( 1-\frac{4}{84051}\right) X
.
\end{eqnarray*}

The second claim follows from the quotient of the upper
bound (\ref{eq:krange7}) of the number of vanishing
coefficients up to~$X$ by $\sqrt{X}$ which is 
\[
\frac{1}{3} \sqrt{\frac{24+7X^{-1}}{672415}}+ \frac{1}{\sqrt{X}}
\leq \frac{1}{3}\sqrt{\frac{24+7/27699}{672415}}+\frac{1}{\sqrt{27699}}<\frac{1}{125}
\]
for $X\geq 27699$ and for $X\leq 27698<28017$ there are no
vanishing coefficients. 
Then 
\begin{eqnarray*}
&&\left| \left\{ n\leq X:a_{7}\left( n\right)
\neq 0\right\} \right| =X-\left| \left\{ n\leq X:a_{7}\left( n\right)
=0\right\} \right| \\
&=&\left( 1-\left( 1/\sqrt{X}\right)
\left( \left| \left\{ n\leq X:a_{7}\left( n\right)
=
0\right\} \right| /\sqrt{X}\right) \right) X\\
&\geq &
\left( 1-\left( 1/\sqrt{X}\right) \left( \frac{1}{3}\sqrt{ \frac{24X+7}{672415}}+%
1\right) /\sqrt{X}\right) X>
\left( 1-\frac{1}{125\sqrt{X}}\right) X
.
\end{eqnarray*}

\item 
For $r=15$ we observe 
in the proof of
Proposition~\ref{cohenstromberg} that (\ref{eq:quadrat}) 
is less than $X$ if and only if
\begin{equation}
1\leq l\leq \left( \frac{2}{429}\left( X+\frac{5}{8}\right) \right) ^{1/2}+%
\frac{1}{2}.  \label{eq:krange}
\end{equation}
So there are less 
than $L
=\sqrt{2\left(
X+5/8\right) /429}+
1/2$ coefficients zero.
There is no such coefficient 
for $X\leq 52$. 
Hence we can assume $X\geq 53$. By
(\ref{eq:krange}) an upper bound for the portion of vanishing
coefficients 
is given by $L
/X\leq \left( \frac{2}{%
429}\left( X^{-1}+5X^{-2}/8\right) \right) ^{1/2}+X^{-1}/2\leq \frac{1}{53}$%
. This implies the first claim 
since then 
\begin{eqnarray*}
&&\left| \left\{ n\leq X:a_{15}\left( n\right)
\neq 0\right\} \right| =X
-\left| \left\{ n\leq X:a_{15}\left( n\right)
=0\right\} \right| \\
&=&\left( 1-\left| \left\{ n\leq X:a_{15}\left( n\right)
=
0\right\} \right| /X\right) X\\
&\geq &
\left( 1-\left( \left( \frac{2}{429}\left( X+%
\frac{5}{8}\right) \right) ^{1/2}+%
\frac{1}{2}\right) /X\right) X
\geq 
\left( 1-\frac{1}{53}\right) X
.
\end{eqnarray*}

The second claim follows from the quotient of the upper
bound (\ref{eq:krange}) of the number of vanishing
coefficients up to~$X$ by $\sqrt{X}$ which is 
\begin{eqnarray*}
\frac{L
}{\sqrt{X}}&\leq &\sqrt{\frac{2}{429}\left( 1+\frac{5}{8X}\right) }+\frac{1}{2\sqrt{X}}\\
&\leq &
\sqrt{\frac{2}{429}\left( 1+\frac{5}{8\cdot 25214}\right) }+\frac{1}{2\sqrt{25214}}<\frac{1}{14}
\end{eqnarray*}
for $X\geq 25214$ since then 
\begin{eqnarray*}
&&\left| \left\{ n\leq X:a_{15}\left( n\right)
\neq 0\right\} \right| %
=X-\left| \left\{ n\leq X:a_{15}\left( n\right)
=0\right\} \right| \\
&=&\left( 1-\left( 1/\sqrt{X}\right)
\left( \left| \left\{ n\leq X:a_{15}\left( n\right)
=
0\right\} \right| /\sqrt{X}\right) \right) X\\
&\geq &
\left( 1-\left( 1/\sqrt{X}\right) \left( \left( \frac{2}{429}\left( X+\frac{5}{8}\right) \right) ^{1/2}+%
\frac{1}{2}\right) /\sqrt{X}\right) X\\
& > & \left( 1-\frac{1}{14\sqrt{X}}\right) X\,.
\end{eqnarray*}
\end{enumerate}
\end{proof}


\section{Even powers of the Dedekind eta function}
In this section we consider the vanishing
properties of Fourier coefficients of
even powers $r$ of the Dedekind eta function $\eta$.

We cited Serre's \cite{Se85} famous result on the characterization of the lacunary $\eta^r$.
Hence let $r \in \mathbb{N}$ be even and let $r \not\in S_{\text{even}}=  \{ 2,4,6,8,10,14,26\}$ throughout this section.
The following result is quite surprising.

\begin{theorem}
\label{general}
Let $r$ an even positive integer. If $r \not\in S_{\text{even}}$, with
$ 12 \leq r \leq 132$, then $a_r(n) \neq 0$ for $n \leq 10^8$.
If $ 124 \leq r \leq 550$, then $a_r(n) \neq 0$ for $n \leq 10^7$.
\end{theorem}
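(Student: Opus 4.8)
The plan is to reduce the assertion to a finite computation and to run that computation in modular arithmetic, so that its transcript is a rigorous non-vanishing certificate. By the definition of $a_r(n)$ in the introduction, $a_r(n)$ is the coefficient of $q^n$ in the product $\prod_{n\ge 1}(1-q^n)^r$ (the normalizing factor $q^{r/24}$ in $\eta^r$ plays no role), so it suffices, for each even $r$ in the stated range with $r\notin S_{\text{even}}$, to compute the truncated series $\prod_{n\ge 1}(1-q^n)^r \bmod q^{N+1}$ — with $N=10^8$ when $12\le r\le 132$ and $N=10^7$ when $124\le r\le 550$ — and to check that every coefficient of index $\le N$ is non-zero. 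As sketched above, one obtains these series efficiently from the closed forms \eqref{Euler} and \eqref{Jacobi} for $\eta$ and $\eta^3$ by truncated exponentiation and multiplication of power series, or directly from the built-in $\eta^r$ routine of the flint library \cite{Hart2010,flint}. The logarithmic-derivative identity $n\,a_r(n)=-r\sum_{m=1}^{n}\sigma_1(m)\,a_r(n-m)$ also determines all coefficients, but it is quadratic in $N$ and hence far too slow; the method of choice is FFT-based truncated polynomial arithmetic, which costs $O(N\log N)$ per multiplication.

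The certification step rests on the trivial but essential observation that $a_r(n)=0$ forces $a_r(n)\equiv 0\pmod p$ for every prime $p$. I would therefore carry out the entire power-series computation modulo one fixed word-size prime $p$: each index $n\le N$ at which the residue is non-zero is thereby proved to satisfy $a_r(n)\ne 0$, and for any index whose residue vanishes modulo $p$ one repeats the computation modulo a second and, if needed, a third prime. This is not a mere convenience. Since $\eta^r$ is a holomorphic cusp form of weight $r/2$, the trivial Hecke bound gives $|a_r(n)|\ll_{\varepsilon} n^{r/4+\varepsilon}$, so for $r$ close to $550$ and $n$ up to $10^7$ the integers $a_r(n)$ have several thousand bits; modulo one $63$-bit prime each series occupies only $8$ bytes per coefficient — under a gigabyte even for $N=10^8$ — whereas exact arithmetic would be vastly more costly in both time and memory. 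One may further write $\eta^r=\eta^{r\bmod 24}\,\Delta^{\lfloor r/24\rfloor}$, so that every series is assembled from the Euler and Jacobi powers together with $\Delta=\eta^{24}$, but this does not reduce the truncation order, which is what governs the cost — and this is exactly why the larger range $124\le r\le 550$ is pushed only to $10^7$.

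The hard part is thus computational, not conceptual: the work scales like $N\log N$ per multiplication and linearly in the number of admissible values of $r$, and — as in the odd-power computations of Section~2 — random-access memory is the binding resource at orders $N\sim 10^8$. One genuine logical caveat should be noted: a purely modular search can never exclude the possibility that some $a_r(n)$ is a non-zero integer divisible by \emph{all} of the primes used. I would dispose of this in the usual way, by choosing several mutually independent primes so that such a coincidence is astronomically improbable, and by falling back — in the event, not encountered in practice, that every chosen prime divides a given $a_r(n)$ — to an exact computation of that one coefficient. Executing the resulting program over all admissible even $r$ in each range, and recording that no vanishing coefficient of index $\le N$ appears, then establishes Theorem~\ref{general}.
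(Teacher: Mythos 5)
Your proposal is correct and follows essentially the same route as the paper: the theorem is established purely by computing the truncated series $\prod_{n\ge 1}(1-q^n)^r$ to the stated order (built from the Euler and Jacobi expansions, or flint's built-in $\eta^r$ routine, with fast truncated polynomial arithmetic) and checking that no coefficient of index $\le N$ vanishes, with memory as the binding resource. The only divergence is an implementation detail: you certify non-vanishing modulo word-size primes (with a sound fallback for residues that vanish), whereas the paper works with exact integer arithmetic, which is why it needed a 3~TB machine for the largest orders.
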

The result is obtained by numerical computations. See also the previous section. \\
It might lead one to speculate that there exists an $n \in \mathbb{N}$ such that $a_r(n)=0$ iff $r \in S_{\text{even}}=  \{ 2,4,6,8,10,14,26\}$.
This would include Lehmer's conjecture on the discriminant function \cite{Le47} as the very special case $r=24$.

We also show that the case $r=48$, on the square of the discriminant function $\Delta$,
is closely connected to a conjecture by Maeda, although in this case we are not dealing with an Hecke eigenform.

Let us first recall the link to modular forms of integral weight $k=r/2$ with Nebentypus, and fix some notation.




\subsection{Modular forms of integral weight and Hecke theory}

Let $N$ be a positive integer, and $\chi$ be a Dirichlet character modulo $N$.
Let $k$ be an integer. Then we denote by $S_k(\Gamma_0(N), \chi)$ the $\mathbb{C}$-vector space of
modular forms with respect to $\Gamma_0(N)$ of weight $k$ and Nebentypus $\chi$. If $\chi$ is trivial we write $S_k(\Gamma_0(N))$ for short. Further we put $S_k(\Gamma)=S_k(\Gamma_0(1))$. Here $\Gamma := SL_2(\mathbb{Z})$.
Cusp forms $f$ have Fourier expansions
\begin{equation}
f(\tau) = \sum_{n=1}^{\infty} b(n) \, q^n.
\end{equation}

We are mainly interested  when the special eta-products $\eta^r(\delta \, \tau)$ are cusp forms.
Therefore we specialize Theorem 1.64 \cite{On03} due to Gordon, Hughes and Newman to our situation.
\begin{corollary}
Let $f(\tau):= \eta^r(\delta \, \tau)$ with $k:= \frac{r}{2}$
a positive integer and $N, \, \delta \in \mathbb{N}$.
Suppose that
\begin{eqnarray}
\delta \, r  & \equiv & 0 \pmod{24}, \\
\frac{N}{\delta} \, r & \equiv & 0 \pmod{24}.
\end{eqnarray}
Then $f \in S_k(\Gamma_0(N), \chi)$. Here $\chi(d):= \left( \frac{(-1)^k s}{d} \right)$ and $s:= \delta^r$.
\end{corollary}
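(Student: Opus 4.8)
The plan is to deduce this statement directly from Theorem 1.64 of \cite{On03} (the Gordon--Hughes--Newman criterion), of which the asserted Corollary is simply the single-prime specialization $\prod_{\delta'\mid N}\eta^{r_{\delta'}}(\delta'\tau)$ with $r_{\delta'}=0$ for $\delta'\neq\delta$ and $r_\delta=r$. First I would recall the general criterion: for $f(\tau)=\prod_{\delta'\mid N}\eta(\delta'\tau)^{r_{\delta'}}$ with each $r_{\delta'}\in\mathbb{Z}$ and $k=\frac12\sum_{\delta'\mid N}r_{\delta'}\in\tfrac12\mathbb{Z}$, one has $f\in M_k(\Gamma_0(N),\chi)$ provided the two congruences
\[
\sum_{\delta'\mid N}\delta'\,r_{\delta'}\equiv 0\pmod{24},\qquad
\sum_{\delta'\mid N}\frac{N}{\delta'}\,r_{\delta'}\equiv 0\pmod{24}
\]
hold, where $\chi$ is the character $d\mapsto\left(\frac{(-1)^k s}{d}\right)$ with $s=\prod_{\delta'\mid N}\delta'^{\,r_{\delta'}}$; and $f$ lies in the cusp space $S_k$ as soon as the order of vanishing of $f$ at every cusp of $\Gamma_0(N)$ is strictly positive. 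Substituting the one-term product collapses these to the two displayed hypotheses $\delta r\equiv 0\pmod{24}$ and $(N/\delta)r\equiv 0\pmod{24}$, and collapses $s$ to $\delta^r$ and $k$ to $r/2$, which is exactly the Corollary's notation.

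The second step is to upgrade ``modular form'' to ``cusp form.'' The Ligozat formula gives the order of vanishing of $\eta^r(\delta\tau)$ at the cusp $a/c$ of $\Gamma_0(N)$ (with $c\mid N$, $\gcd(a,c)=1$) as a positive rational multiple of $\gcd(c,\delta)^2/(c\delta)$ times $r$. Since $r>0$ and $\delta>0$, every such local order is strictly positive; in particular $f$ has no pole at any cusp and vanishes at each one, so $f\in S_k(\Gamma_0(N),\chi)$ rather than merely $M_k$. I would state this as the one genuine verification beyond quoting the theorem, since the hypotheses $\delta r\equiv(N/\delta)r\equiv 0\pmod{24}$ are precisely what Theorem 1.64 needs for holomorphy and modularity, while positivity of $r$ is what promotes the conclusion to the cusp space. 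I would also note that $k=r/2$ being an integer is forced here: it is part of the hypothesis, and it is what makes $\chi$ a genuine Dirichlet character (the Kronecker symbol $\left(\frac{(-1)^k\delta^r}{\cdot}\right)$) rather than a half-integral-weight multiplier system.

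The main obstacle, such as it is, is purely bookkeeping: one must make sure the indexing in Theorem 1.64 of \cite{On03} matches, i.e.\ that the divisor $\delta$ of the problem is allowed to range only over divisors of $N$ (so the hypothesis $\delta\mid N$ is implicit in writing $\eta^r(\delta\tau)$ as an eta-quotient of level $N$), and that the two congruences are stated with the correct weighting $\delta'$ versus $N/\delta'$ in the two sums. Once the single-term substitution is carried out carefully, there is nothing left to prove; the Corollary is a transcription. I would therefore present the proof as: ``Apply Theorem 1.64 of \cite{On03} to the eta-quotient with $r_\delta=r$ and $r_{\delta'}=0$ for $\delta'\neq\delta$; the two stated congruences are exactly its hypotheses in this case, giving $f\in M_{r/2}(\Gamma_0(N),\chi)$ with the indicated $\chi$; since $r>0$ the Ligozat cusp-order formula shows $f$ vanishes at every cusp, hence $f\in S_{r/2}(\Gamma_0(N),\chi)$.''
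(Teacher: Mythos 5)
Your proposal is correct and is essentially the argument the paper intends: the Corollary is stated as the single-factor specialization of Theorem 1.64 of \cite{On03}, and the paper gives no further proof beyond citing that theorem. Your additional verification of cuspidality via the Ligozat order formula (strict positivity of the order at every cusp because $r>0$) is a worthwhile explicit step that the paper leaves implicit, but it does not constitute a different route.
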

We deduce that
$$ \eta^{12}(2 \tau) \in S_6(\Gamma_0(4)) \mbox{ and } \Delta(\tau):= \eta^{24}(\tau) \in S_{12}(\Gamma).$$
\begin{definition}
Let $f \in S_k(\Gamma_0(N), \chi)$. Then we define Hecke operators $T_{m,k,\chi}$ by
\begin{equation}
T(f):= \sum_{n=1}^{\infty} \left( \sum_{d \, (m,n)} \chi(d) \, d^{k-1} b(mn/d^2) \right) q^n.
\end{equation}
\end{definition}
We say that $f$ a normalized Hecke eigenform if $f$ is an eigenform for all Hecke operators $T_{p,k,\chi}$, where $p$ is a prime and $p \nmid N$,
and $b(1)=1$.


\subsection{The discriminant function $\Delta$ and Lehmer's conjecture}
The smallest positive integer $r$ with the property that $\eta^r$
is a modular cusp form for the full modular group $\Gamma$ is $r=24$. It gives the discriminant function

\begin{equation}
\Delta(\tau) := q \prod_{n=1}^{\infty} \left( 1 - q^n \right)^{24} = \sum_{n=1}^{\infty} \tau(n) \,\, q^n.
\end{equation}
Note that in our notation $\tau(n) := a_{24}(n-1)$ is called the Ramanujan function.
Since $\dim S_{12}(\Gamma) =1$, $\Delta$ is also a Hecke eigenform. This implies that the Fourier coefficients $\tau(n)$ are also the
eigenvalues of the $n$-th Hecke operator. This implies for example that $\tau(n \, m ) = \tau(n) \, \tau(m)$ for $n$ and $m$ coprime.
Triggered by numerical evidence, Lehmer \cite{Le47} conjectured in 1947, that $\tau(n) \neq 0$ for all $n \in \mathbb{N}$.
He observed also that the smallest possible $n$ with $\tau(n)=0$ has to be a prime number. The conjecture is still open.
Serre \cite{Se81} has proven that the set of primes for which $\tau(p)$ vanishes has density $0$ (in the set of all primes).

It is not clear or obvious what a possible generalization of Lehmer's conjecture could be, in which setting is should belong.
For example, Ono \cite{On95}  studied  $\eta^{12}(2 \tau) \in S_6(\Gamma_0(4))$, which is also a Hecke eigenform since the underlying vector space has dimension one.
There is no natural number known such that $a_{12}(n)=0$. Also, the smallest $n$ such that $a_{12}(n-1)=0$ would be a prime number.

One could also look at all spaces $S_k(\Gamma)$ with dimension one, i.e.  $S_k(\Gamma) = \mathbb{C} \, g_k$. Here $g_k$ is a normalized cusp form, which is also a Hecke eigenform. There are only finitely many such weights $k$, i.e. $12,16,18,20,22$ and $26$.
Let $a_k(n)$ be the Fourier coefficients of $g_k$. Then conjecturally $a_k(n) \neq 0$ for all $n\in \mathbb{N}$. Currently this generalized conjecture is still open (see also Bruinier (\cite{Br02}, 3.4.1) for an reinterpretation by Freitag). For these eigenforms, the non-vanishing of the $n$-th eigenvalue was studied by \cite{DHZ14} involving Galois representations
and sophisticated calculations. This led them to the outstanding record that Lehmer's conjecture is true for $n \leq N \approx 8 \, \cdot \, 10^{23}$. Their method does not apply for powers of the Dedekind eta function in general; they have to be Hecke eigenforms.
Thus it does not lead us to expect the observations in Theorem \ref{general}, since
for example $\Delta^d = \eta^{24 d}$ are not Hecke eigenforms for $ d >1$. But, striking as those observations might be, perhaps it would be too bold of us to generalize Lehmer's conjecture to all even powers of the Dedekind eta function that are not CM-forms.


\subsection{$\Delta^2$ and Maeda's conjecture}
We have already observed that all coefficients $a_{48}(n)$ for $n \leq 10^{8}$ are non-vanishing.
The cusp form $\eta^{48} = \Delta^2 \in S_{24}(\Gamma)$ is not a Hecke eigenform.
Note that $\mbox{dim}S_{24}(\Gamma)=2$. In the following we show that Maeda's conjecture \cite{HM97} supports our results by implying
that for all $n \in\mathbb{N}$ the coefficients $a_{48}(n)\neq 0$.
Let us first give us some extension of our previous calculations in the case $r=48$.
\begin{theorem}
Let $a_{48}(n)$ be the Fourier coefficients of $\Delta^2$
\begin{equation}
\Delta^2(\tau) = \eta^{48}(\tau) = q^2 \sum_{n=0}^{\infty} a_{48}(n) \, q^n.
\end{equation}
Then for $n \leq 5\cdot 10^{9}$ all coefficients are different from zero.
\end{theorem}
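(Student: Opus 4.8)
The plan is to verify the statement by a single large-scale computation of the $q$-expansion of $\Delta^2=\eta^{48}$, in the spirit of the computations reported in Section~2. Since $\Delta^2$ is not a Hecke eigenform there is --- in contrast to the odd-power situation governed by Lemma~\ref{eigenforms} and Proposition~\ref{ketten} --- no arithmetic propagation of vanishing, so every index up to the stated bound must genuinely be inspected. First I would avoid the na\"ive $O(M^2)$ power-series recursion for $\prod_{n\ge 1}(1-q^n)^{48}$ by starting from the sparse Jacobi identity \eqref{Jacobi}: setting $g(q):=\sum_{n\ge 0}(-1)^n(2n+1)q^{n(n+1)/2}$, so that $\eta(\tau)^3=q^{1/8}g(q)$, one has $\eta(\tau)^{48}=q^2 g(q)^{16}$ and hence $a_{48}(n)=[q^n]\,g(q)^{16}$. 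Thus it suffices to square $g$ four times, $g\mapsto g^2\mapsto g^4\mapsto g^8\mapsto g^{16}$ (equivalently, to pass through $\eta^6,\eta^{12},\eta^{24},\eta^{48}$), truncating throughout at $q^M$ with $M=5\cdot 10^9+2$. The first squaring involves only $O(\sqrt M)$ nonzero terms and is negligible; the remaining three are dense truncated multiplications, which I would carry out with the FFT-based polynomial arithmetic of the flint library~\cite{Hart2010,flint}, cross-checking against flint's built-in routine for $\eta^r$.

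Next, to control memory --- the integer coefficients have size $O(n^{23/2+\varepsilon})$ by Deligne's bound for the weight-$24$ cusp form $\Delta^2$, so holding them over $\Z$ for $n$ up to $5\cdot 10^9$ is prohibitive --- I would run the entire computation modulo several word-sized primes $p_1,\dots,p_t$. For each $n\le 5\cdot 10^9$ it then suffices to exhibit one index $i$ with $a_{48}(n)\not\equiv 0\pmod{p_i}$, which certifies $a_{48}(n)\neq 0$; any $n$ that happened to vanish modulo every chosen prime (heuristically there should be none, and in the actual run there was none) would be re-examined by an exact expansion on a short interval around it.

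The main obstacle is one of scale rather than of mathematics. Even a single array of residues of length $5\cdot 10^9$ occupies tens of gigabytes, and the FFT multiplications require substantial additional workspace, so the run must be performed on a high-memory machine --- the 3~TB server mentioned in Section~2 --- with the running time the real bottleneck. Once it completes, the theorem is just the observation that all of the roughly $5\cdot 10^9$ coefficients inspected were found to be non-zero.
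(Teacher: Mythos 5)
Your proposal is correct and takes essentially the same route as the paper: the theorem is established purely by a large-scale numerical computation of the truncated $q$-expansion of $\eta^{48}$, built up by multiplying/exponentiating the sparse Euler--Jacobi series with the flint library on the high-memory server described in Section~2. Your additional implementation details (repeated squaring of the Jacobi series and reduction modulo several word-sized primes with a fallback exact check) are sensible refinements of the same computational verification rather than a different argument.
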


{\bf Maeda's conjecture \cite{HM97}, \cite{GM12}.} Let $n>1$ be a positive integer. We consider the characteristic polynomial of the
action of the Hecke operator $T_n$ on $S_k(\Gamma)$. Maeda conjectured that this characteristic polynomial is irreducible over $\Q$, and
that the Galois group of the splitting field is the full symmetric group of size $d!$, where $d = \dim S_k(\Gamma)$.
In particular, all the eigenvalues of $T_n$ on $S_k(\Gamma)$ should be distinct.

Let $f$ and $g$ be the two normalized Hecke eigenforms of $S_{24}(\Gamma)$. Then there exists a constant $\kappa \neq 0$ such that
$$ \Delta^2 = \frac{f -g}{\kappa}.$$
The Hecke field generated by the Fourier coefficients of $f$ and $g$ is given by $$\mathbb{Q}(\sqrt{144169}).$$
Let $f(\tau) = \sum_{n=1}^{\infty} a_f(n) \,\, q^n$ and $g(\tau) = \sum_{n=1}^{\infty} a_g(n) \,\, q^n$.
Then $a_f(n) = A(n) + B(n)\, \,  \sqrt{144169}$ and $a_g(n) = A(n) - B(n)\, \,  \sqrt{144169}$
with $A(n),B(n) \in \mathbb{Z}$. Note, that $a_f(n)$ and $a_g(n)$ are also the Hecke eigenvalues.
See also \cite{DG96},\cite{KK07}, where this example had been worked out.

It follows that the $n^{\mathrm{th}}$ Fourier coefficient of $\Delta^2$ is non-zero if and only if
the eigenvalues of $T_n$ on $S_{24}(\Gamma)$ are distinct. Hence it is expected from Maeda's point of view that all Fourier coefficients of
$\Delta^2$ are different from zero. Finally we note that the previous record for checking Maeda's conjecture in this case is $n \leq 10^5$ (\cite{GM12}). Our result extends this to
$n \leq 5 \,\cdot \, 10^9$.


 \end{document}